\newtheoremstyle{theorem}
  {10pt}		  
  {10pt}  
  {\sl}  
  {\parindent}     
  {\bf}  
  {. }    
  { }    
  {}     
\theoremstyle{theorem}
\newtheorem{theorem}{Theorem}
\newtheoremstyle{defi}
  {10pt}		  
  {10pt}  
  {\rm}  
  {\parindent}     
  {\bf}  
  {. }    
  { }    
  {}     
\theoremstyle{defi}
\begin{document}

\title{A Note on the Hypercomplex Riemann-Cauchy Like Relations for Quaternions and Laplace Equations.}

\author{J. A. P.F. Mar\~ao$^1$ and M. F. Borges$^2$\\
$^1$Department of Mathematics\\ Federal University of Maranh\~ao - S\~ao Lu\'is-MA\\
65085-580, Maranh\~ao, BRAZIL\\
josemarao7@hotmail.com\\[2pt]
$^2$UNESP - S\~ao Paulo State University\\
S.J. Rio Preto Campus\\
15054-000, S\~ao Jos\'e do Rio Preto, BRAZIL\\
borges@ibilce.unesp,br}

\maketitle

\begin{abstract}
In this note it is worked out a new set of Laplace-Like equations for quaternions through Riemann-Cauchy hypercomplex relations otained earlier \cite{BorgesZeMarcio}. As in the theory of functions of a complex variable, it is expected that this new set of Laplace-Like equations might be applied to a large number of Physical problems, providing new insights in the Classical Theory Fields.

{\bf AMS Subject Classification:} 30G99, 30E99

{\bf Key Words and Phrases:}Quaternions, Laplace's Equations, Riemann-Cauchy Relations
\end{abstract}

\section{Cauchy-Riemann Equations (Functions of one complex variable).}
In order to fix ideas will be considered theorem that relates the partial derivatives for the case of a function $f(z)$ of a complex variable $f(z)=u(x,y)+iv(x,y)$ \cite{Kodaira}, which here will be called simply Riemann-Cauchy conditions. These relations say that the first order partial derivatives of functions $u(x,y)$ and $v(x,y)$ satisfy relations according to the following theorem:
\begin{theorem} Is $f(z)=u(x,y)+iv(x,y)$ a function defined and continues in a neighborhood of the point $ z=x+yi$ and differentiable at $z$. Then the partial derivatives of the first order of $u(x,y)$ and $v(x,y)$ exist and satisfy the relations:
\begin{equation}\frac{\partial u(x,y)}{\partial x}=\frac{\partial v(x,y)}{\partial y}\end{equation}
\begin{equation}\frac{\partial u(x,y)}{\partial y}=-\frac{\partial v(x,y)}{\partial x}\end{equation}
\end{theorem}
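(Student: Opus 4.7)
The plan is to exploit the fact that complex differentiability at $z$ means the limit defining $f'(z)$ exists and is independent of the direction of approach. I would start from
$f'(z)=\lim_{h\to 0}\frac{f(z+h)-f(z)}{h}$
and then evaluate this single limit along two distinguished paths in $\mathbb{C}$: first with $h=\Delta x$ real, then with $h=i\Delta y$ purely imaginary. Each path will produce an explicit expression for $f'(z)$ in terms of partial derivatives of $u$ and $v$, which must exist precisely because the complex limit exists when restricted to that coordinate direction.

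For the horizontal approach $h=\Delta x\to 0$, I would split the real and imaginary parts of $f(z+h)-f(z)$ to obtain $f'(z)=\frac{\partial u}{\partial x}(x,y)+i\frac{\partial v}{\partial x}(x,y)$. For the vertical approach $h=i\Delta y\to 0$, I would divide by $i\Delta y$ and use $1/i=-i$ to obtain $f'(z)=\frac{\partial v}{\partial y}(x,y)-i\frac{\partial u}{\partial y}(x,y)$. Since both expressions represent the same complex number $f'(z)$, equating real parts yields equation~(1) and equating imaginary parts yields equation~(2).

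The only subtlety is to justify that the four first-order partial derivatives actually exist, not merely that they coincide in the stated pattern. I would handle this by noting that the hypothesized existence of the complex limit, when restricted to the real-axis direction, is \emph{by definition} the simultaneous existence of $\partial u/\partial x$ and $\partial v/\partial x$ (convergence of a complex sequence is equivalent to convergence of its real and imaginary components); an analogous remark for the imaginary-axis direction gives $\partial u/\partial y$ and $\partial v/\partial y$. Thus complex differentiability delivers both the existence claim and the two identities simultaneously, and no further regularity argument is needed.
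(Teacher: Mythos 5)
Your argument is correct and complete: evaluating the difference quotient for $f'(z)$ along $h=\Delta x$ and along $h=i\Delta y$, using $1/i=-i$, and equating real and imaginary parts of the two resulting expressions yields exactly $(1)$ and $(2)$, and your observation that convergence of the complex difference quotient along each axis is equivalent to convergence of its real and imaginary components properly secures the existence of all four partial derivatives. The paper itself gives no proof of this theorem (it is quoted from the cited reference \cite{Kodaira}), and your directional-limit argument is precisely the standard proof found there, so there is nothing to criticize.
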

Thus, if $f(z)$ is analytic in a domain $\Gamma,$ its partial derivatives exist and satisfy the set of relations $(1)$ and $(2)$ over all point in $Gamma.$
Moreover, with the above functions class $C ^{2}$ using Schwartz's Theorem for partial derivatives immediately follows the following equations:
\begin{equation}\frac{\partial^{2}u}{\partial x^{2}}+\frac{\partial^{2}u}{\partial y^{2}}=0\end{equation}
\begin{equation}\frac{\partial^{2}v}{\partial x^{2}}+\frac{\partial^{2}v}{\partial y^{2}}=0\end{equation}
the above equations are called Laplace's equations.

\section{Cauchy-Riemann conditions for quaternionic functions.}
Now the conditions are presented as the Riemann-Cauchy like relations for quaternionic functions. It follows the Theorem \cite{BorgesZeMarcio}:

\begin{theorem}
For any pair pontis $a$ and $b$ and any path joining them simply conect subdomain of the four-dimmensional space, the integral $\int_{a}^{b}fdq$ is independent form the given path if and only if there is a function $F=F_{1}+F_{2}i+F_{3}j+F_{4}k$ such that $\int_{a}^{b}fdq=F(a)-F(b),$ and satisfying the following relations:
\begin{equation}
\frac{\partial F_{1}}{\partial x_{1}}=\frac{\partial F_{2}}{\partial x_{2}}=\frac{\partial F_{3}}{\partial x_{3}}=\frac{\partial F_{4}}{\partial x_{4}}\end{equation}
\begin{equation}\frac{\partial F_{2}}{\partial x_{1}}=-\frac{\partial F_{1}}{\partial x_{2}}=-\frac{\partial F_{3}}{\partial x_{4}}=\frac{\partial F_{4}}{\partial x_{3}}\end{equation}
\begin{equation}\frac{\partial F_{3}}{\partial x_{1}}=-\frac{\partial F_{1}}{\partial x_{3}}=-\frac{\partial F_{2}}{\partial x_{4}}=\frac{\partial F_{4}}{\partial x_{2}}\end{equation}
\begin{equation}\frac{\partial F_{4}}{\partial x_{1}}=\frac{\partial F_{1}}{\partial x_{4}}=-\frac{\partial F_{2}}{\partial x_{3}}=-\frac{\partial F_{3}}{\partial x_{2}}\end{equation}
\end{theorem}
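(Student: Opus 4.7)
My strategy is to recognise the equivalence as a standard statement about exactness of a quaternion-valued one-form: path independence of $\int_a^b f\,dq$ in a simply connected subdomain is equivalent to $f\,dq$ being exact, i.e.\ $f\,dq=dF$ for some quaternionic function $F$. Relations (5)--(8) will then appear as the component-wise reading of that exactness identity.

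The first step is computational. I would write $f=f_1+f_2 i+f_3 j+f_4 k$ and $dq=dx_1+i\,dx_2+j\,dx_3+k\,dx_4$ and expand the product $f\,dq$ using the quaternion multiplication table ($i^2=j^2=k^2=-1$, $ij=k$, $jk=i$, $ki=j$ and the reversed products). This produces $f\,dq$ as a quaternion whose scalar and $i,j,k$ parts are each an explicit linear combination of $f_1,\dots ,f_4$ weighted by $dx_1,\dots ,dx_4$. Simultaneously I would expand $dF=\sum_{n=1}^{4}\tfrac{\partial F}{\partial x_n}\,dx_n$ and group it by basis element, so that both sides of the proposed identity $dF=f\,dq$ are displayed in the same form and can be matched coefficient by coefficient.

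For the sufficiency direction ($\Leftarrow$), assume that $F$ exists and satisfies (5)--(8). Reading off any single column of the coefficient matching, say $f_n:=\partial F_n/\partial x_1$, I would show that relations (5)--(8) force the remaining twelve coefficient identifications between $\partial F_m/\partial x_n$ and the components of $f\,dq$ to hold automatically. Hence $dF=f\,dq$ identically, and the usual fundamental-theorem-of-calculus argument along any rectifiable path gives $\int_a^b f\,dq=F(b)-F(a)$ (up to the sign convention adopted in the statement), which is in particular path-independent. For the necessity direction ($\Rightarrow$), fix a basepoint $a_0$ in the simply connected subdomain and define $F(q):=\int_{a_0}^{q}f\,dq'$; path independence makes this well defined. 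Differentiating along the four axis-parallel directions recovers the four column-sums of the expansion from step one, and reading these sixteen real identities across the four components $F_1,F_2,F_3,F_4$ yields (5)--(8) by inspection.

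The main obstacle I anticipate is purely bookkeeping: because quaternion multiplication is non-commutative, one must fix once and for all the convention $f\,dq$ (rather than $dq\,f$) and track the signs arising from $ij=k$, $ji=-k$, etc.\ without any slips. The precise placement of the minus signs in (6)--(8) is dictated entirely by this choice, while the conceptual content is simply the four-dimensional, non-commutative analogue of the complex case recorded in Section~1.
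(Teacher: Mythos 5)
The paper offers no argument for this theorem at all --- its entire ``proof'' is a citation to reference [1] --- so there is no internal approach to compare yours against; your proposal has to stand on its own. The strategy (path independence $\Leftrightarrow$ exactness of $f\,dq$, then coefficient matching against $dF$) is the natural one, but the step you defer as ``purely bookkeeping'' is precisely where the proposal breaks down. Carry out the expansion you describe: with $f=f_1+f_2i+f_3j+f_4k$, $dq=dx_1+i\,dx_2+j\,dx_3+k\,dx_4$ and the standard table $ij=k$, $jk=i$, $ki=j$, the identity $dF=f\,dq$ forces
\[
\begin{array}{ll}
\nabla F_1=(f_1,\,-f_2,\,-f_3,\,-f_4), & \nabla F_2=(f_2,\,f_1,\,-f_4,\,f_3),\\[2pt]
\nabla F_3=(f_3,\,f_4,\,f_1,\,-f_2), & \nabla F_4=(f_4,\,-f_3,\,f_2,\,f_1),
\end{array}
\]
where $\nabla F_m=(\partial F_m/\partial x_1,\ldots,\partial F_m/\partial x_4)$. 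This reproduces relations (5) and (6) but not (7) and (8): relation (7) asserts $\partial F_3/\partial x_1=-\partial F_2/\partial x_4$, whereas the table gives $\partial F_3/\partial x_1=f_3=\partial F_2/\partial x_4$, so (7) as printed would force $f_3\equiv 0$; likewise (8) asserts $\partial F_4/\partial x_1=\partial F_1/\partial x_4$ while the expansion gives $f_4=-f_4$. Switching to the other order $dq\,f$ repairs (7) but breaks (6) and (8), and no single choice of left or right multiplication by the standard units produces all four relations simultaneously (the pattern of signs in the fourth rows/columns of (5)--(8) is not that of any quaternionic multiplication matrix). So the concluding step of your plan --- ``reading these sixteen real identities \dots\ yields (5)--(8) by inspection'' --- fails as written.

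The concrete gap, then, is that you have not identified the convention (order of multiplication, definition of $dq$, or a conjugation somewhere) under which the coefficient matching actually yields the sign pattern of (5)--(8); as it stands your argument proves a correct exactness theorem with a \emph{different} set of relations, not the theorem as stated. To close the gap you must either extract the precise convention from [1] and redo the expansion under it, or conclude --- as the computation above suggests --- that (7) and (8) contain sign errors and say so explicitly; this matters downstream, because Section 3 of the paper differentiates exactly these relations. A smaller point: the statement's normalization $\int_a^b f\,dq=F(a)-F(b)$ is opposite to what your fundamental-theorem argument gives ($F(b)-F(a)$); you acknowledge this parenthetically, but in a finished proof you should fix one sign and carry it through consistently.
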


\begin{proof}
The proof of this theorem can be analyzed in greater detail in \cite{BorgesZeMarcio}.
\end{proof}

\section{The Laplace's Equations.}
In this section we show that a new set of hypercomplex Laplace equations may be generated in four dimensions. Through its use of Riemann-Cauchy like relations \cite{BorgesZeMarcio}
Therefore, the functions that make up the quaternionic function, depend on $x_1$,$x_2$,$x_3$ and $x_4$ and are supposed to be of class $C^{2}$ and thus the theorem is valid Schwartz.

The first step to obtain the Laplace equation is the derivation of equations $(5),$ $(6),$ $(7)$ and $(8)$ over $x_{1},$ $x_{2}$, $x_{3}$ and $x_{4}$ will be done as follows:
Deriving the conditions of equation $(5),$ we have:

\begin{equation}
\begin{array}{ccccccccccccccccccccccccccccccccccccccccccccccccccccccccccccccc}
\displaystyle \frac{\partial^{2}F_{1}}{\partial x_{1}^{2}}=\frac{\partial^{2}F_{2}}{\partial x_{1}\partial x_{2}}=\frac{\partial^{2}F_{3}}{\partial x_{1}\partial x_{3}}=\frac{\partial^{2}F_{4}}{\partial x_{1}\partial x_{4}}\\
\displaystyle \frac{\partial^{2}F_{1}}{\partial x_{1}\partial x_{2}}=\frac{\partial^{2}F_{2}}{\partial
x_{2}^{2}}=\frac{\partial^{2}F_{3}}{\partial x_{2}\partial x_{3}}=\frac{\partial^{2}F_{4}}{\partial x_{2}\partial x_{4}}\\
\displaystyle \frac{\partial^{2}F_{1}}{\partial x_{3}\partial x_{1}}=\frac{\partial^{2}F_{2}}{\partial x_{3}\partial x_{2}}=\frac{\partial^{2}F_{3}}{\partial x_{3}^{2}}=\frac{\partial^{2}F_{4}}{\partial x_{4}\partial x_{3}}\\
\displaystyle \frac{\partial^{2}F_{1}}{\partial x_{1}\partial x_{4}}=\frac{\partial^{2}F_{2}}{\partial x_{4}\partial x_{2}}=\frac{\partial^{2}F_{3}}{\partial x_{4}\partial x_{3}}=\frac{\partial^{2}F_{4}}{\partial x_{4}^{2}}.
\end{array}
\end{equation}

Deriving the conditions of equation $(6),$ we have:

\begin{equation}
\begin{array}{ccccccccccccccccccccccccccccccccccccccccccccccccccccccccccccccc}
\displaystyle \frac{\partial^{2}F_{2}}{\partial x_{1}^{2}}=-\frac{\partial^{2}F_{1}}{\partial x_{1}\partial x_{2}}=-\frac{\partial^{2}F_{3}}{\partial x_{1}\partial x_{4}}=\frac{\partial^{2}F_{4}}{\partial x_{1}\partial x_{3}}\\
\displaystyle \frac{\partial^{2}F_{2}}{\partial x_{1}\partial x_{2}}=-\frac{\partial^{1}F_{1}}{\partial
x_{2}^{2}}=-\frac{\partial^{2}F_{3}}{\partial x_{2}\partial x_{4}}=\frac{\partial^{2}F_{4}}{\partial x_{3}\partial x_{2}}\\
\displaystyle \frac{\partial^{2}F_{2}}{\partial x_{3}\partial x_{1}}=-\frac{\partial^{2}F_{1}}{\partial x_{3}\partial x_{2}}=-\frac{\partial^{2}F_{3}}{\partial x_{3} \partial x_{4}}=\frac{\partial^{2}F_{4}}{\partial x_{3}^{2}}\\
\displaystyle \frac{\partial^{2}F_{2}}{\partial x_{4}\partial x_{1}}=-\frac{\partial^{2}F_{1}}{\partial x_{4}\partial x_{2}}=-\frac{\partial0^{2}F_{3}}{\partial x_{4}^{2}}=\frac{\partial^{2}F_{4}}{\partial x_{4}\partial x_{3}}.
\end{array}
\end{equation}

Deriving the conditions of equation $(7),$ we obtain that:

\begin{equation}
\begin{array}{ccccccccccccccccccccccccccccccccccccccccccccccccccccccccccccccc}
\displaystyle \frac{\partial^{2}F_{3}}{\partial x_{1}^{2}}=-\frac{\partial^{2}F_{1}}{\partial x_{1}\partial x_{3}}=-\frac{\partial^{2}F_{2}}{\partial x_{1}\partial x_{4}}=\frac{\partial^{2}F_{4}}{\partial x_{1}\partial x_{2}}\\
\displaystyle \frac{\partial^{2}F_{3}}{\partial x_{1}\partial x_{2}}=-\frac{\partial^{1}F_{1}}{\partial
x_{2}\partial x_{3}}=-\frac{\partial^{2}F_{2}}{\partial x_{2}\partial x_{4}}=\frac{\partial^{2}F_{4}}{\partial x_{2}^{2}}\\
\displaystyle \frac{\partial^{2}F_{3}}{\partial x_{3}\partial x_{1}}=-\frac{\partial^{2}F_{1}}{\partial x_{3}^{2}}=-\frac{\partial^{2}F_{2}}{\partial x_{4}\partial x_{3}}=\frac{\partial^{2}F_{4}}{\partial x_{3}\partial x_{2}}\\
\displaystyle \frac{\partial^{2}F_{3}}{\partial x_{1}\partial x_{4}}=-\frac{\partial^{2}F_{1}}{\partial x_{4}\partial x_{3}}=-\frac{\partial^{2}F_{2}}{\partial x_{4}^{2}}=\frac{\partial^{2}F_{4}}{\partial x_{4}\partial x_{2}}.
\end{array}
\end{equation}

Deriving the conditions of equation $(8),$ we have:

\begin{equation}
\begin{array}{ccccccccccccccccccccccccccccccccccccccccccccccccccccccccccccccc}
\displaystyle \frac{\partial^{2}F_{4}}{\partial x_{1}^{2}}=\frac{\partial^{2}F_{1}}{\partial x_{1}\partial x_{4}}=-\frac{\partial^{2}F_{2}}{\partial x_{1}\partial x_{3}}=-\frac{\partial^{2}F_{3}}{\partial x_{1}\partial x_{2}}\\
\displaystyle\frac{\partial^{2}F_{4}}{\partial x_{1}\partial x_{2}}=\frac{\partial^{1}F_{1}}{\partial
x_{2}\partial x_{4}}=-\frac{\partial^{2}F_{2}}{\partial x_{2}\partial x_{3}}=-\frac{\partial^{2}F_{3}}{\partial x_{2}^{2}}\\
\displaystyle\frac{\partial^{2}F_{4}}{\partial x_{3}\partial x_{1}}=\frac{\partial^{2}F_{1}}{\partial x_{3}\partial x_{4}}=-\frac{\partial^{2}F_{2}}{\partial x_{3}^{2}}=-\frac{\partial^{2}F_{3}}{\partial x_{3}\partial x_{2}}\\
\displaystyle\frac{\partial^{2}F_{4}}{\partial x_{1}\partial x_{4}}=\frac{\partial^{2}F_{1}}{\partial x_{4}^{2}}=-\frac{\partial^{2}F_{2}}{\partial x_{4}\partial x_{3}}=-\frac{\partial^{2}F_{3}}{\partial x_{4}\partial x_{2}}.
\end{array}
\end{equation}

In correlating those derived groups of partial derivatives $(9),$ $(10),$  $(11)$ and $(12),$ immediately following the Laplace Equations:

\begin{equation}\frac{\partial F_{1}}{\partial x_{1}^{2}}+\frac{\partial F_{1}}{\partial x_{2}^{2}}+\frac{\partial F_{1}}{\partial x_{3}^{2}}+\frac{\partial F_{1}}{\partial x_{4}^{2}}=0\end{equation}

\begin{equation}\frac{\partial F_{2}}{\partial x_{1}^{2}}+\frac{\partial F_{2}}{\partial x_{2}^{2}}+\frac{\partial F_{2}}{\partial x_{3}^{2}}+\frac{\partial F_{2}}{\partial x_{4}^{2}}=0\end{equation}

\begin{equation}\frac{\partial F_{3}}{\partial x_{1}^{2}}+\frac{\partial F_{3}}{\partial x_{2}^{2}}+\frac{\partial F_{3}}{\partial x_{3}^{2}}+\frac{\partial F_{3}}{\partial x_{4}^{2}}=0\end{equation}

and

\begin{equation}\frac{\partial F_{4}}{\partial x_{1}^{2}}+\frac{\partial F_{4}}{\partial x_{2}^{2}}+\frac{\partial F_{4}}{\partial x_{3}^{2}}+\frac{\partial F_{4}}{\partial x_{4}^{2}}=0\end{equation}

Therefore, it is more simplified manner, the set of equations appears below:
\begin{equation}
\begin{array}{c}
\Delta f_{1}=0\\
\Delta f_{2}=0\\
\Delta f_{3}=0\\
\Delta f_{4}=0
\end{array}
\end{equation}

\section{Conclusion}
In this not it is showed the feasibility of obtaining the equations of Laplace through the Cauchy-Riemann conditions for quaternions. This fact will allow the relationship between equations that can explain many physical phenomena. You can also use the above equations as a way of stating a theorem for harmonic functions for quaternions that satisfy the conditions of Cauchy results. It is worth mentioning the importance of $\cite{BorgesZeMarcio}$ established relations used in this work.

\end{document}